\def\NZQ{\mathbb}               
\def\ZZ{{\NZQ Z}}
\def\RR{{\NZQ R}}
\def\frk{\mathfrak}               
\def\Phi{{\frk N}}
\def\eb{{\bold e}}
\def\opn#1#2{\def#1{\operatorname{#2}}} 
\opn\chara{char} 
\opn\length{\ell} 
\opn\pd{pd} 
\opn\rk{rk}
\opn\projdim{proj\,dim} 
\opn\injdim{inj\,dim} 
\opn\rank{rank}
\opn\depth{depth} 
\opn\grade{grade} 
\opn\height{height}
\opn\embdim{emb\,dim} 
\opn\codim{codim}
\opn\Tr{Tr} 
\opn\bigrank{big\,rank}
\opn\superheight{superheight}
\opn\lcm{lcm}
\opn\trdeg{tr\,deg}
\opn\reg{reg} 
\opn\lreg{lreg} 
\opn\ini{in} 
\opn\lpd{lpd}
\opn\size{size}
\opn\mult{mult}
\opn\dist{dist}
\opn\cone{cone}
\opn\lex{lex}
\opn\rev{rev}
\opn\div{div} \opn\Div{Div} \opn\cl{cl} \opn\Cl{Cl}
\opn\Spec{Spec} \opn\Supp{Supp} \opn\supp{supp} \opn\Sing{Sing}
\opn\Ass{Ass} \opn\Min{Min}
\opn\Ann{Ann} \opn\Rad{Rad} \opn\Soc{Soc}
\opn\Syz{Syz} \opn\Im{Im} \opn\Ker{Ker} \opn\Coker{Coker}
\opn\Am{Am} \opn\Hom{Hom} \opn\Tor{Tor} \opn\Ext{Ext}
\opn\End{End} \opn\Aut{Aut} \opn\id{id} \opn\ini{in}
\opn\nat{nat}
\opn\pff{pf}
\opn\Pf{Pf} \opn\GL{GL} \opn\SL{SL} \opn\mod{mod} \opn\ord{ord}
\opn\Gin{Gin}
\opn\Hilb{Hilb}\opn\adeg{adeg}\opn\std{std}\opn\ip{infpt}
\opn\Pol{Pol}
\opn\sat{sat}
\opn\Var{Var}
\opn\Gen{Gen}
\opn\aff{aff} \opn\con{conv} \opn\relint{relint} \opn\st{st}
\opn\lk{lk} \opn\cn{cn} \opn\core{core} \opn\vol{vol}
\opn\link{link} \opn\star{star}
\opn\gr{gr}
\def\Jc{{\mathcal J}}
\def\Gc{{\mathcal G}}
\def\Oc{{\mathcal O}}
\def\pot#1#2{#1[\kern-0.28ex[#2]\kern-0.28ex]}
\opn\dirlim{\underrightarrow{\lim}}
\opn\inivlim{\underleftarrow{\lim}}
\let\to=\rightarrow
\def\Implies{\ifmmode\Longrightarrow \else
        \unskip${}\Longrightarrow{}$\ignorespaces\fi}
\def\implies{\ifmmode\Rightarrow \else
        \unskip${}\Rightarrow{}$\ignorespaces\fi}
\def\iff{\ifmmode\Longleftrightarrow \else
        \unskip${}\Longleftrightarrow{}$\ignorespaces\fi}
\newtheorem{Theorem}{Theorem}[section]
\newtheorem{Lemma}[Theorem]{Lemma}
\newtheorem{Corollary}[Theorem]{Corollary}
\newtheorem{Example}[Theorem]{Example}
\newtheorem{Conjecture}[Theorem]{Conjecture}
\let\epsilon\varepsilon
\let\phi=\varphi
\let\kappa=\varkappa
\def\qed{\ifhmode\textqed\fi
      \ifmmode\ifinner\quad\qedsymbol\else\dispqed\fi\fi}
\def\textqed{\unskip\nobreak\penalty50
       \hskip2em\hbox{}\nobreak\hfil\qedsymbol
       \parfillskip=0pt \finalhyphendemerits=0}
\def\dispqed{\rlap{\qquad\qedsymbol}}
\opn\dis{dis}
\opn\height{height}
\opn\dist{dist}
\def\pnt{{\raise0.5mm\hbox{\large\bf.}}}
\opn\Lex{Lex}
\opn\conv{conv}
\begin{document}
\title{Quadratic Gr\"obner bases of twinned order polytopes}
\author[T.~Hibi]{Takayuki Hibi}
\address[Takayuki Hibi]{Department of Pure and Applied Mathematics,
	Graduate School of Information Science and Technology,
	Osaka University,
	Toyonaka, Osaka 560-0043, Japan}
\email{hibi@math.sci.osaka-u.ac.jp}
\author[K.~Matsuda]{Kazunori Matsuda}
\address[Kazunori Matsuda]{Department of Pure and Applied Mathematics,
	Graduate School of Information Science and Technology,
	Osaka University,
	Toyonaka, Osaka 560-0043, Japan}
\email{kaz-matsuda@math.sci.osaka-u.ac.jp}
\begin{abstract}
Let $P$ and $Q$ be finite partially ordered sets on $[d] = \{1, \ldots, d\}$,
and $\Oc(P) \subset \RR^{d}$ and $\Oc(Q) \subset \RR^{d}$ their order polytopes.
The twinned order polytope of $P$ and $Q$ 
is the convex polytope $\Delta(P,-Q) \subset \RR^{d}$
which is the convex hull of $\Oc(P) \cup (- \Oc(Q))$.
It follows that the origin of $\RR^{d}$ belongs to the interior of $\Delta(P,-Q)$
if and only if $P$ and $Q$ possess a common linear extension.  
It will be proved that, when the origin of $\RR^{d}$ belongs to the interior of 
$\Delta(P,-Q)$, the toric ideal of $\Delta(P,-Q)$ possesses a quadratic Gr\"obner basis
with respect to a reverse lexicographic order for which the variable corresponding to 
the origin is smallest.  Thus in particular 
if $P$ and $Q$ possess a common linear extension, then the twinned order polytope
$\Delta(P,-Q)$ is a normal Gorenstein Fano polytope. 
\end{abstract}
\maketitle
\section*{Introduction}
In \cite{HMOS}, from a viewpoint of Gr\"obner bases, 
the centrally symmetric configuration (\cite{CSC}) of the order polytope
(\cite{Stanley}) of a finite partially ordered set is studied.
In the present paper,  
a far-reaching generalization of \cite{HMOS} will be discussed. 
 
Let $P = \{ p_{1}, \ldots, p_{d} \}$ and $Q = \{ q_{1}, \ldots, q_{d} \}$ 
be finite partially ordered sets (posets, for short)
with $|P| = |Q| = d$.
A subset $I$ of $P$ is called a {\em poset ideal} of $P$ if
$p_{i} \in I$ and $p_{j} \in P$ together with $p_{j} \leq p_{i}$ guarantee
$p_{j} \in I$.  Thus in particular the empty set $\emptyset$ as well as $P$ itself
is a poset ideal of $P$. 
Write $\Jc(P)$ for the set of poset ideals of $P$ and
$\Jc(Q)$ for that of $Q$.
A {\em linear extension} of $P$ is a permutation $\sigma = i_{1}i_{2}\cdots i_{d}$ 
of $[d] = \{ 1, \ldots, d \}$ for which $i_{a} < i_{b}$ if $p_{i_{a}} < p_{i_{b}}$.

Let $\eb_{1}, \ldots, \eb_{d}$ stand for the canonical unit coordinate
vectors of $\RR^{d}$.  Then, for each subset $I \subset P$ and 
for each subset $J$ of $Q$, we define
$\rho(I) = \sum_{p_{i}\in I} \eb_{i}$ and
$\rho(J) = \sum_{q_{j}\in J} \eb_{j}$.
In particular $\rho(\emptyset)$ is the origin ${\bf 0}$ of $\RR^{d}$.
Define $\Omega(P, - Q) \subset \ZZ^{d}$ as 
\[
\Omega(P, - Q) = 
\{ \, \rho(I) \, : \, \emptyset \neq I \in \Jc(P) \, \} \cup
\{ \, - \rho(J) \, : \, \emptyset \neq J \in \Jc(Q) \, \} 
\cup \{ {\bf 0} \} 
\]
and write $\Delta(P,-Q) \subset \RR^{d}$ for the convex polytope 
which is the convex hull of $\Omega(P, - Q)$.
We call $\Delta(P,-Q)$ the {\em twinned order polytope} of $P$ and $Q$.
In other words, the twinned order polytope 
$\Delta(P,-Q)$ of $P$ and $Q$
is the convex polytope
which is the convex hull of $\Oc(P) \cup ( - \Oc(Q))$, where 
$\Oc(P) \subset \RR^{d}$ is the order polytope of $P$
and $- \Oc(Q) = \{ - \beta \, ; \, \beta \in \Oc(Q) \}$.
One has $\dim \Delta(P,-Q) = d$.  
Since $\rho(P) = \rho(Q) = \eb_{1} + \cdots + \eb_{d}$,
it follows that the origin ${\bf 0}$ of $\RR^{d}$
cannot be a vertex of $\Delta(P,-Q)$.  In fact,
the set of vertices of $\Delta(P,-Q)$ are $\Omega(P, - Q) \setminus \{\bf 0\}$.

This paper is organized as follows.
In Section $1$, a basic fact that the origin of $\RR^{d}$ belongs to the interior 
of $\Delta(P,-Q)$ if and only if $P$ and $Q$ possess a common linear extension
(Lemma \ref{Sapporo}).
We then show, in Section $2$, that,
when the origin of $\RR^{d}$ belongs to the interior of $\Delta(P,-Q)$, 
the toric ideal of $\Delta(P,-Q)$ possesses a quadratic Gr\"obner basis
with respect to a reverse lexicographic order for which 
the variable corresponding to the origin is smallest
(Theorem \ref{Boston}).
Thus in particular 
if $P$ and $Q$ possess a common linear extension, then the twinned order polytope
$\Delta(P,-Q)$ is a normal Gorenstein Fano polytope
(Corollary \ref{Berkeley}). 
Finally, we conclude this paper with a collection of examples in Section $3$.
We refer the reader to \cite{dojoEN} for fundamental materials on Gr\"obner bases
and toric ideals.

\section{Linear extensions}
Let $P$ and $Q$ be finite posets with $|P| = |Q| = d$.
In general, the origin ${\bf 0}$ of $\RR^{d}$ may not belong to the interior of 
the twinned order polytope $\Delta(P,-Q)$ of $P$ and $Q$.
It is then natural to ask when 
the origin of $\RR^{d}$ belongs to the interior of $\Delta(P,-Q)$.

\begin{Lemma}
\label{Sapporo}
Let $P = \{ p_{1}, \ldots, p_{d} \}$ and $Q = \{ q_{1}, \ldots, q_{d} \}$ 
be finite posets.
Then the following conditions are equivalent{\rm :}
\begin{enumerate}
\item[{\rm (i)}]
The origin of $\RR^{d}$ belongs to the interior of $\Delta(P,-Q)${\rm ;}
\item[{\rm (ii)}] 
$P$ and $Q$ possess a common linear extension.
\end{enumerate} 
\end{Lemma}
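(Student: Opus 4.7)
The plan is to translate both conditions into statements about linear functionals. Since $\Delta(P,-Q)$ is full-dimensional and contains $\mathbf{0}$, the origin lies in its interior if and only if no nonzero $v\in\RR^d$ satisfies $\langle v,s\rangle\le 0$ for every vertex $s$ of $\Delta(P,-Q)$. Using the explicit list of vertices from the introduction, this becomes the nonexistence of a nonzero $v\in\RR^d$ with
\[
(\star)\qquad \sum_{p_i\in I} v_i\le 0 \ \text{for every } I\in\Jc(P),\qquad \sum_{q_j\in J} v_j\ge 0 \ \text{for every } J\in\Jc(Q).
\]

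For (ii)$\Rightarrow$(i), a common linear extension $\sigma=i_1\cdots i_d$ produces saturated chains $I_k=\{p_{i_1},\ldots,p_{i_k}\}\in\Jc(P)$ and $J_k=\{q_{i_1},\ldots,q_{i_k}\}\in\Jc(Q)$ with $\rho(I_k)=\rho(J_k)=\eb_{i_1}+\cdots+\eb_{i_k}$. A $v$ satisfying $(\star)$ would then force every partial sum $v_{i_1}+\cdots+v_{i_k}$ to be simultaneously $\le 0$ and $\ge 0$, hence $=0$ for every $k$; successive differences yield $v=0$, ruling out a nonzero such $v$.

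The main work is (i)$\Rightarrow$(ii), which I will prove contrapositively. After transferring the orders to $[d]$ and writing $\prec_P,\prec_Q$ for them, the absence of a common linear extension is equivalent to the transitive closure of $\prec_P\cup\prec_Q$ failing antisymmetry; this yields a shortest directed cycle $a_1\to a_2\to\cdots\to a_L\to a_1$ with distinct $a_k$ and $L\ge 2$, each edge labelled $P$ or $Q$ according to its origin. Since neither $P$ nor $Q$ alone can carry a cycle, both labels occur. Letting $E_P,E_Q$ denote the two edge-type index sets, the key construction is
\[
v\;=\;2\sum_{k\in E_P}\bigl(\eb_{a_{k+1}}-\eb_{a_k}\bigr),
\]
extended by zero off the cycle (indices mod $L$). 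For any $S\subseteq[d]$, setting $x_k=1$ if $a_k\in S$ and $0$ otherwise, summation-by-parts gives $\sum_{i\in S}v_i=2\sum_{k\in E_P}(x_{k+1}-x_k)$; the telescoping identity $\sum_{k=1}^{L}(x_{k+1}-x_k)=0$ also rewrites this as $-2\sum_{k\in E_Q}(x_{k+1}-x_k)$. For $S\in\Jc(P)$ each $k\in E_P$ satisfies $a_k\prec_P a_{k+1}$, so downward closure gives $x_{k+1}\le x_k$ and the first form is $\le 0$; for $S\in\Jc(Q)$ the second form is $\le 0$ by the symmetric argument, so $\sum_{i\in S}v_i\ge 0$. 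Finally, the cyclic label sequence must contain a $P$-to-$Q$ transition, at which position $v_{a_k}\ne 0$, so $v\ne 0$.

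The only substantive obstacle is the construction of $v$ in (i)$\Rightarrow$(ii); the crux is the dual role of the telescoping identity, which lets a single vector built from an obstruction cycle witness the $P$-ideal inequalities and the $Q$-ideal inequalities at the same time.
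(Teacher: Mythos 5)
Your proof is correct, and the treatment of (i)$\Rightarrow$(ii) is genuinely different from the paper's. The paper argues directly: if $\mathbf{0}$ is interior, it can be written as a convex combination of \emph{all} vertices of $\Delta(P,-Q)$ with strictly positive coefficients; from the equality $\sum a_I\rho(I)=\sum b_J\rho(J)=\sum_i a_i^*\eb_i$ one deduces $a_i^*>a_j^*$ whenever $p_i<p_j$ (and similarly for $Q$), so any permutation sorting the $a_i^*$ in decreasing order is simultaneously a linear extension of $P$ and of $Q$. You instead prove the contrapositive via a separating functional: the absence of a common linear extension yields a directed cycle in $\prec_P\cup\prec_Q$, and the telescoping of $\sum(x_{k+1}-x_k)$ around the cycle lets the same vector $v$ be nonpositive on all $P$-ideals and nonnegative on all $Q$-ideals, explicitly exhibiting a supporting hyperplane through $\mathbf{0}$. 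Your route sidesteps the (true but unproved-in-the-paper) fact that an interior point admits a representation with \emph{all} vertex coefficients positive, and it produces an explicit combinatorial certificate of non-interiority; the paper's route is shorter and extracts the linear extension without passing through graph cycles. For (ii)$\Rightarrow$(i) both arguments use the same chain of points $\pm(\eb_{i_1}+\cdots+\eb_{i_k})$; the paper notes their convex hull is a full-dimensional cross-polytope around $\mathbf{0}$, while you phrase the same fact dually by showing any $v$ satisfying $(\star)$ must vanish. One cosmetic remark: the factor $2$ in your definition of $v$ is unnecessary.
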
 

\begin{proof}
((i) $\Rightarrow$ (ii))
Suppose that the origin ${\bf 0}$ of $\RR^{d}$ belongs to the interior of $\Delta(P,-Q)$.
Since $\Omega(P, - Q) \setminus \{\bf 0\}$ is the set of vertices of
$\Delta(P,-Q)$, the existence of an equality 
\begin{eqnarray}
\label{interior}
{\bf 0} = \sum_{\emptyset \neq I \in \Jc(P)} a_{I} \cdot \rho(I)
\ + \sum_{\emptyset \neq J \in \Jc(Q)} b_{J} \cdot (- \rho(J)),
\end{eqnarray}
where each of $a_{I}$ and $b_{J}$ is a positive real numbers, is guaranteed.
Let
\[
\sum_{\emptyset \neq I \in \Jc(P)} a_{I} \cdot \rho(I) 
= \sum_{i=1}^{d} a^{*}_{i} \eb_{i},
\, \, \, \, \, 
\sum_{\emptyset \neq J \in \Jc(Q)} b_{J} \cdot \rho(J) 
= \sum_{i=1}^{d} b^{*}_{i} \eb_{i},
\]
where each of $a^{*}_{i}$ and $b^{*}_{i}$ is a positive rational number.
Since each $I$ is a poset ideal of $P$ and each $J$ is a poset ideal of $Q$,
it follows that $a^{*}_{i} > a^{*}_{j}$ if $p_{i} < p_{j}$.
Let $\sigma = i_{1} i_{2} \cdots i_{d}$ be a permutation of $[d]$
for which $i_{a} < i_{b}$ if $a^{*}_{i} > a^{*}_{j}$.
Then $\sigma$ is a linear extension of $P$.  
Furthermore, by using (\ref{interior}), one has  
$a^{*}_{i} = b^{*}_{i}$ for $1 \leq i \leq d$.  It then turn out that
$\sigma$ is also a linear extension of $Q$, as required.  

((ii) $\Rightarrow$ (i))
Let $\sigma = i_{1}i_{2}\cdots i_{d}$ be a linear extension of 
each of $P$ and $Q$.
Then $I_{k} = \{p_{i_{1}}, \ldots, p_{i_{k}}\}$ is a poset ideal of $P$
and $J_{k} = \{q_{i_{1}}, \ldots, q_{i_{k}}\}$ is a poset ideal of $Q$
for $1 \leq k \leq d$.  Hence
\begin{eqnarray}
\label{point}
\pm\eb_{i_{1}}, \, \pm(\eb_{i_{1}} + \eb_{i_{2}}), \ldots, \, 
\pm(\eb_{i_{1}} + \cdots + \eb_{i_{d}})
\end{eqnarray}
belong to $\Omega(P, - Q)$.  Let $\Gamma \subset \RR^{d}$ denote the convex polytope
which is the convex hull of (\ref{point}).
Since $\dim \Gamma = d$ and since the origin
of $\RR^{d}$ belongs to the interior of $\Gamma$,
it follows that the origin of $\RR^{d}$ belongs to
the interior of $\Delta(P, - Q)$, as desired.  
\end{proof}

\section{Quadratic Gr\"obner bases}
Let, as before, $P = \{ p_{1}, \ldots, p_{d} \}$ and $Q = \{ q_{1}, \ldots, q_{d} \}$ 
be finite partially ordered sets.
Let $K[{\bf t}, {\bf t}^{-1}, s] 
= K[t_{1}, \ldots, t_{d}, t_{1}^{-1}, \ldots, t_{d}^{-1}, s]$
denote the Laurent polynomial ring in $2d + 1$ variables over a field $K$. 
If $\alpha = (\alpha_{1}, \ldots, \alpha_{d}) \in \ZZ^{d}$, then
${\bf t}^{\alpha}s$ is the Laurent monomial
$t_{1}^{\alpha_{1}} \cdots t_{d}^{\alpha_{d}}s$. 
In particular ${\bf t}^{\bf 0}s = s$.
The {\em toric ring} of $\Omega(P, - Q)$ is the subring 
$K[\Omega(P, - Q)]$ of $K[{\bf t}, {\bf t}^{-1}, s]$ which is generated
by those Laurent monomials ${\bf t}^{\alpha}s$ 
with $\alpha \in \Omega(P, - Q)$.
Let 
\[
K[{\bf x}, {\bf y}, z] = K[\{x_{I}\}_{\emptyset \neq I \in \Jc(P)} \cup 
\{y_{J}\}_{\emptyset \neq J \in \Jc(Q)} \cup \{ z \}]
\]
denote the polynomial ring in $|\Omega(P, - Q)|$ variables over $K$
and define the surjective ring homomorphism 
$\pi : K[{\bf x}, {\bf y}, z] \to K[\Omega(P, - Q)]$
by setting 
\begin{itemize}
\item
$\pi(x_{I}) = {\bf t}^{\rho(I)}s$, 
where $\emptyset \neq I \in \Jc(P)$;
\item
$\pi(y_{J}) = {\bf t}^{- \rho(J)}s$,
where $\emptyset \neq J \in \Jc(Q)$;
\item
$\pi(z) = s$.
\end{itemize}
The {\em toric ideal} $I_{\Omega(P, - Q)}$ of $\Omega(P, - Q)$ is 
the kernel of $\pi$. 

Let $<$ denote a reverse lexicographic order on $K[{\bf x}, {\bf y}, z]$
satisfying
\begin{itemize}
\item
$z < x_{I}$ and $z < y_{J}$;
\item
$x_{I'} < x_{I}$ if $I' \subset I$;
\item
$y_{J'} < y_{J}$ if $J' \subset J$,
\end{itemize}
and $\Gc$ the set of the following binomials:
\begin{enumerate}
\item[(i)]
$x_{I}x_{I'} - x_{I\cap I'}x_{I \cup I'}$;
\item[(ii)]
$y_{J}y_{J'} - y_{J\cap J'}y_{J \cup J'}$;
\item[(iii)]
$x_{I}y_{J} - x_{I \setminus \{p_{i}\}}y_{J \setminus \{q_{i}\}}$,
\end{enumerate}
where 
\begin{itemize}
\item
$x_{\emptyset} = y_{\emptyset} = z$;
\item
$I$ and $I'$ belong to $\Jc(P)$ and 
$J$ and $J'$ belong to $\Jc(Q)$;
\item $p_{i}$ is a maximal element of $I$ and $q_{i}$ is a maximal element of $J$.
\end{itemize}

\begin{Theorem}
\label{Boston}
Work with the same situation as above.
Suppose that $P$ and $Q$ possess a common linear extension. 
Then $\Gc$ is a Gr\"obner basis of
$I_{\Omega(P, - Q)}$ with respect to $<$.
\end{Theorem}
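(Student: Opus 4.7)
The plan is to apply Buchberger's criterion: show each $S$-polynomial of pairs of elements of $\mathcal{G}$ reduces to zero modulo $\mathcal{G}$. Combined with the easy containment $\mathcal{G} \subset I_{\Omega(P,-Q)}$, this will give that $\mathcal{G}$ is a Gr\"obner basis of the ideal it generates; a separate Hilbert function (or lattice point count) argument will upgrade this to $(\mathcal{G}) = I_{\Omega(P,-Q)}$.

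I first read off the initial monomials. Under the revlex order, the variables $x_{I \cap I'}$ (in type (i)), $y_{J \cap J'}$ (in type (ii)), and $z$ together with $x_{I \setminus \{p_i\}}, y_{J \setminus \{q_i\}}$ (in type (iii)) are all smaller than the remaining variables in their respective binomials, so $\ini_<(g)$ is the first monomial as written: $x_I x_{I'}$, $y_J y_{J'}$, or $x_I y_J$. I then dispatch the $S$-pairs case by case. Pairs with coprime leading monomials --- in particular all (i)--(ii) pairs --- reduce automatically. Pairs of (i)--(i) and of (ii)--(ii) reduce to zero by Hibi's classical straightening law for the toric ideal of an order polytope. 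The genuinely new cases are (i)--(iii), (ii)--(iii), and (iii)--(iii).

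The main obstacle is the (iii)--(iii) case, where two binomials share an $x$- or $y$-variable but use different ``reduction indices'' $i, i'$. Their $S$-polynomial brings us to mixed monomials $x_K y_M$ that must themselves admit further type (iii) reductions, requiring an index $j$ with $p_j$ simultaneously maximal in $K$ and $q_j$ simultaneously maximal in $M$. Here the common linear extension $\sigma$ of $P$ and $Q$ is decisive: it lets us choose the reduction index canonically at every step (for instance, the $\sigma$-largest admissible index), and a tracking argument along $\sigma$ shows that the two reduction paths converge to the same standard monomial. Without the hypothesis, simple examples (such as two chains with opposite orderings) exhibit binomials in $I_{\Omega(P,-Q)}$ that lie outside $(\mathcal{G})$, so the common-linear-extension assumption really is essential. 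Once Buchberger is verified, the standard monomials (chains in $\Jc(P)$ paired with chains in $\Jc(Q)$ subject to the joint-maximal-index prohibition) are counted against lattice points of $n\Delta(P,-Q)$ to conclude $(\mathcal{G}) = I_{\Omega(P,-Q)}$.
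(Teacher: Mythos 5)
Your strategy is not the one the paper uses, and as written it has real gaps. The paper does not run Buchberger at all: it invokes the criterion from \cite[(0.1)]{OHrootsystem}, which reduces the whole problem to showing that $\pi$ is injective on monomials outside $\langle \ini_<(\Gc)\rangle$, and then proves that injectivity by a direct index-tracking argument. That criterion simultaneously certifies both that $\ini_<(\Gc)$ generates the full initial ideal and that $\langle\Gc\rangle = I_{\Omega(P,-Q)}$, so no Hilbert-function or lattice-point count is needed. Your two-step plan (Buchberger, then a counting argument) is an admissible alternative route in principle, but both steps are left essentially unexecuted.

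Concretely, the gaps are these. First, you dismiss the (i)--(iii) and (ii)--(iii) $S$-pairs too quickly: an $S$-polynomial such as
$y_J\,(x_Ix_{I'} - x_{I\cap I'}x_{I\cup I'}) - x_{I'}\,(x_Iy_J - x_{I\setminus\{p_i\}}y_{J\setminus\{q_i\}})$
produces the term $x_{I\cup I'}\,y_J$, and $p_i$ need not remain maximal in $I\cup I'$, so it is not automatic that a further type (iii) reduction applies; one must show some other reduction chain terminates at the correct normal form. Second, the (iii)--(iii) case, which you correctly identify as the crux, is handled only by the phrase ``a tracking argument along $\sigma$ shows that the two reduction paths converge.'' That is the entire content of the theorem and it is not supplied; you would need to prove, at every stage of reduction, the existence of an index $j$ that is simultaneously maximal in the relevant $K\in\Jc(P)$ and $M\in\Jc(Q)$, which is precisely the combinatorial work the paper does via its maximal-index $j_*$ argument (choosing the largest coordinate where the $x$-exponent vectors differ, showing $p_{j_*}$ is maximal in some $I_e$, then using $\pi(u)=\pi(v)$ to force $q_{j_*}$ maximal in some $J_{e'}$, contradicting that $u$ avoids $\ini_<(\Gc)$). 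Third, even granting Buchberger, the final step ``count standard monomials against lattice points of $n\Delta(P,-Q)$'' presupposes knowing the Ehrhart polynomial of $\Delta(P,-Q)$ or the normality of the polytope, neither of which is available before the theorem is proved; indeed normality is deduced \emph{from} the theorem in Corollary~\ref{Berkeley}. You would instead have to compare against the Hilbert function of $K[\Omega(P,-Q)]$ directly, which again amounts to the injectivity statement the paper proves. I would recommend replacing the Buchberger/counting outline by the paper's standard-monomial injectivity argument, or, if you want to keep Buchberger, writing out the $S$-pair reductions in full rather than gesturing at them.
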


\begin{proof}
It is clear that $\Gc \subset I_{\Omega(P, - Q)}$.
In general, if $f = u - v$ is a binomial, then $u$ is called the {\em first}
monomial of $f$ and $v$ is called the {\em second} monomial of $f$. 
The initial monomial of each of the binomials (i) -- (iii) 
with respect to $<$ is its first monomial. 
Let ${\rm in}_{<}(\Gc)$ denote the set of initial monomials of binomials 
belonging to $\Gc$.  It follows from \cite[(0.1)]{OHrootsystem} that,
in order to show that $\Gc$ is a Gr\"obner basis of
$I_{\Omega(P, - Q)}$ with respect to $<$, what we must prove is
the following:
($\clubsuit$) If $u$ and $v$ are monomials belonging to 
$K[{\bf x}, {\bf y}, z]$ with $u \neq v$ such that 
$u \not\in \langle {\rm in}_{<}(\Gc) \rangle$ 
and $v \not\in \langle {\rm in}_{<}(\Gc) \rangle$,
then $\pi(u) \neq \pi(v)$.

Let $u$ and $v$ be monomials belonging to $K[{\bf x}, {\bf y}, z]$
with $u \neq v$.  Write
\[
u = z^{\alpha} x_{I_{1}}^{\xi_{1}} \cdots x_{I_{a}}^{\xi_{a}}
y_{J_{1}}^{\nu_{1}} \cdots y_{J_{b}}^{\nu_{b}},
\, \, \, \, \, \, \, \, \, \, 
v = z^{\alpha'} x_{I'_{1}}^{\xi'_{1}} \cdots x_{I'_{a'}}^{\xi'_{a'}}
y_{J'_{1}}^{\nu'_{1}} \cdots y_{J'_{b'}}^{\nu'_{b'}},
\]
where
\begin{itemize}
\item
$\alpha \geq 0$, $\alpha' \geq 0$;
\item
$I_{1}, \ldots, I_{a}, I'_{1}, \ldots, I'_{a'} 
\in \Jc(P) \setminus \{ \emptyset \}$;
\item
$J_{1}, \ldots, J_{b}, J'_{1}, \ldots, J'_{b'} 
\in \Jc(Q) \setminus \{ \emptyset \}$;
\item
$\xi_{1}, \ldots, \xi_{a}, 
\nu_{1}, \ldots, \nu_{b},
\xi'_{1}, \ldots, \xi'_{a'}, 
\nu'_{1}, \ldots, \nu'_{b'} > 0$,
\end{itemize}
and where $u$ and $v$ are relatively prime with
$u \not\in \langle {\rm in}_{<}(\Gc) \rangle$ 
and $v \not\in \langle {\rm in}_{<}(\Gc) \rangle$.
Especially either $\alpha = 0$ or $\alpha' = 0$.
Let, say, $\alpha' = 0$.  Thus
\[
u = z^{\alpha} x_{I_{1}}^{\xi_{1}} \cdots x_{I_{a}}^{\xi_{a}}
y_{J_{1}}^{\nu_{1}} \cdots y_{J_{b}}^{\nu_{b}},
\, \, \, \, \, \, \, \, \, \, 
v = x_{I'_{1}}^{\xi'_{1}} \cdots x_{I'_{a'}}^{\xi'_{a'}}
y_{J'_{1}}^{\nu'_{1}} \cdots y_{J'_{b'}}^{\nu'_{b'}}.
\]
By using (i) and (ii), it follows that
\begin{itemize}
\item
$I_{1} \subset I_{2} \subset \cdots \subset I_{a}, \, 
I_{1} \neq I_{2} \neq \cdots \neq I_{a}$;
\item
$J_{1} \subset J_{2} \subset \cdots \subset J_{b}, \, 
J_{1} \neq J_{2} \neq \cdots \neq J_{b}$;
\item
$I'_{1} \subset I'_{2} \subset \cdots \subset I'_{a'}, \, 
I'_{1} \neq I'_{2} \neq \cdots \neq I'_{a'}$;
\item
$J'_{1} \subset J'_{2} \subset \cdots \subset J'_{b'}, \, 
J'_{1} \neq J'_{2} \neq \cdots \neq J'_{b'}$.
\end{itemize}
Furthermore, by virtue of \cite{Hibi1987}, it suffices to discuss 
$u$ and $v$ with $(a, a') \neq (0, 0)$ and $(b, b') \neq (0,0)$.

Let $A_{i}$ denote the power of $t_{i}$ appearing in 
$\pi(x_{I_{1}}^{\xi_{1}} \cdots x_{I_{a}}^{\xi_{a}})$
and $A'_{i}$ the power of $t_{i}$ appearing in 
$\pi(x_{I'_{1}}^{\xi'_{1}} \cdots x_{I'_{a'}}^{\xi'_{a'}})$.
Similarly let $B_{i}$ denote the power of $t_{i}^{-1}$ appearing in 
$\pi(y_{J_{1}}^{\nu_{1}} \cdots y_{J_{b}}^{\nu_{b}})$
and $B'_{i}$ the power of $t_{i}^{-1}$ appearing in 
$\pi(y_{J'_{1}}^{\nu'_{1}} \cdots y_{J'_{b'}}^{\nu'_{b'}})$.

Since $P$ and $Q$ possess a common linear extension,
after relabeling the elements of $P$ and $Q$, we assume that
if $p_{r} < p_{s}$ in $P$, then $r < s$, and
if $q_{r'} < q_{s'}$ in $Q$, then $r' < s'$.

Let $1 \leq j_{*} \leq d$ denote the biggest integer 
for which one has $A_{j_{*}} \neq A'_{j_{*}}$.  Since $I_{a} \neq I'_{a'}$,
the existence of $j_{*}$ is guaranteed.
Let $j_{*} = d$ and, say,
$A_{d} > A'_{d}$.  Then $p_{d} \in I_{a}$.  
Since $p_{d}$ is a maximal element of $P$ and
$q_{d}$ is that of $Q$,
by using (iii), it follows that $q_{d}$ cannot belong to $J_{b}$.
Hence $\pi(u) \neq \pi(v)$, as desired.

Let $j_{*} < d$ and $A_{j_{*}} > A'_{j_{*}}$.
Let $1 \leq e \leq a$ denote the integer with $p_{j_{*}} \in I_{e}$ 
and $p_{j_{*}} \not\in I_{e - 1}$.  
We claim that $p_{j_{*}}$ is a maximal element of $I_{e}$.
To see why this is true, let $p_{j_{*}} < p_{h}$ in $I_{e}$.
Then $j_{*} < h$. 
Since both $p_{j_{*}}$ and $p_{h}$ belong to
each of $I_{e}, I_{e+1}, \ldots, I_{a}$, it follows that
$A_{j_{*}} = A_{h}$.
Now, since $p_{j_{*}} < p_{h}$,
one has $A'_{j_{*}} \geq A'_{h}$.
Hence $A_{h} = A_{j_{*}} > A'_{j_{*}} \geq A'_{h}$.
However, the definition of $j_{*}$ says that
$A_{h} = A'_{h}$, a contradiction.  Hence 
$p_{j_{*}}$ is a maximal element of $I_{e}$.

Now, suppose that $\pi(u) = \pi(v)$.
Then $B_{d} = B'_{d}, \ldots, B_{j_{*}+1} = B'_{j_{*}+1}$ and
$B_{j_{*}} > B'_{j_{*}}$.  Then the above argument guarantees  
the existence of $J_{e'}$ for which $q_{j_{*}}$ is a maximal element
of $J_{e'}$.  The fact that $p_{j_{*}}$ is a maximal element of $I_{e}$
and $q_{j_{*}}$ is that of $J_{e'}$
contradicts (iii).  As a result, one has $\pi(u) \neq \pi(v)$, as desired. 
\, \, \, \, \, \, \, \, \, \, \, 
\end{proof}

Theorem \ref{Boston} is a far-reaching generalization of \cite[Theorem 2.2]{HMOS}.
We refer the reader to \cite{HMOS} and \cite{harmony} for basic materials on
normal Gorenstein Fano polytopes.
As in \cite[Corollary 2.3]{HMOS} and \cite[Corollary 1.3]{harmony}, 
it follows that

\begin{Corollary}
\label{Berkeley}
If $P$ and $Q$ possess a common linear extension, then the twinned order polytope
$\Delta(P,-Q)$ is a normal Gorenstein Fano polytope. 
\end{Corollary}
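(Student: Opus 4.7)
The plan is to derive Corollary~\ref{Berkeley} as a direct consequence of Theorem~\ref{Boston} and Lemma~\ref{Sapporo}, by invoking the standard dictionary between a suitable squarefree Gr\"obner basis of the toric ideal and the normal Gorenstein Fano property of the underlying polytope. The template for this deduction is provided by \cite[Corollary~2.3]{HMOS} and \cite[Corollary~1.3]{harmony}, and the strategy is to handle the three adjectives (Fano, normal, Gorenstein) in turn.

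For the Fano property: Lemma~\ref{Sapporo} already places ${\bf 0}$ in the interior of $\Delta(P,-Q)$ as soon as $P$ and $Q$ admit a common linear extension, and each vertex $\pm\rho(I)$ of $\Delta(P,-Q)$ is a $\{0,\pm 1\}$-vector with at least one nonzero coordinate, hence is automatically a primitive lattice vector.

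Next, I would apply Theorem~\ref{Boston} to obtain the quadratic, squarefree Gr\"obner basis $\Gc$ of $I_{\Omega(P,-Q)}$. By the standard principle that a toric ideal with a squarefree initial ideal gives rise to a normal toric ring (see \cite{dojoEN}), it follows that $K[\Omega(P,-Q)]$ is normal, and consequently $\Delta(P,-Q)$ is a normal lattice polytope with $\Omega(P,-Q) = \Delta(P,-Q)\cap\ZZ^{d}$. Because the vertices of $\Delta(P,-Q)$ are exactly $\Omega(P,-Q)\setminus\{{\bf 0}\}$, the origin ${\bf 0}$ emerges as the unique interior lattice point of $\Delta(P,-Q)$.

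Finally, for the Gorenstein (reflexive) property, the decisive structural feature of $\Gc$ is that every initial monomial is a squarefree quadratic in the variables $\{x_I\}\cup\{y_J\}$ and never involves $z$, despite $z$ being the smallest variable in the chosen reverse lexicographic order. This ensures that $z$ is a nonzerodivisor on the quotient $K[{\bf x},{\bf y},z]/{\rm in}_{<}(I_{\Omega(P,-Q)})$, and therefore also on $K[\Omega(P,-Q)]$. Combined with normality and with ${\bf 0}$ being the unique interior lattice point, this is precisely the hypothesis of the black-box reflexivity criterion used in \cite{HMOS,harmony}, so $\Delta(P,-Q)$ is reflexive, i.e., Gorenstein Fano. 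The substantive work has already been absorbed into Theorem~\ref{Boston}; the only remaining obstacle is the bookkeeping step of matching the Gr\"obner-theoretic output against the precise hypotheses of the cited reflexivity criterion.
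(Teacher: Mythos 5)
Your proposal is correct and follows essentially the same route as the paper, which simply deduces the corollary from Theorem~\ref{Boston} and Lemma~\ref{Sapporo} by invoking the reflexivity criterion of \cite[Corollary~2.3]{HMOS} and \cite[Corollary~1.3]{harmony} (squarefree initial ideal with the origin-variable $z$ absent from the initial monomials, hence normal and reflexive). You spell out the individual steps (primitivity of vertices, normality from the squarefree initial ideal, $z$ a nonzerodivisor modulo the initial ideal) that the paper leaves implicit in the citation, but the underlying argument is identical.
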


\section{Examples}
We conclude this paper with a collection of examples.  It is natural to ask, 
if, in general, the toric ideal of $I_{\Omega(P, - Q)}$ possesses
a quadratic Gr\"{o}bner basis with respect to a reverse lexicographic order
as in Theorem \ref{Boston}.

\begin{Example}
{\em
In general, a toric ideal $I_{\Omega(P, - Q)}$ may not possess a quadratic 
Gr\"{o}bner basis with respect to a reverse lexicographic order $<$ introduced as above.
Let $P = \{p_{1}, \ldots, p_{5}\}$ and $Q = \{q_{1}, \ldots, q_{5}\}$ 
be the following finite posets: 


\vspace{5mm}

\begin{xy}
	\ar@{} (0,  0) ; (30,  0)  *{P = } , 
	\ar@{} (0,  0) ; (44,  -10)  *++!R{p_1}*\dir<4pt>{*} = "A", 
	\ar@{} "A"; (64,  -10) *++!L{p_2}*\dir<4pt>{*} = "B", 
	\ar@{-} "A"; (44,  0) *++!R{p_3}*\dir<4pt>{*} = "C", 
	\ar@{-} "B"; (64,  0) *++!L{p_4}*\dir<4pt>{*} = "D", 
	\ar@{-} "B" ; "C", 
	\ar@{-} "C" ; (54, 10) *++!R{p_5}*\dir<4pt>{*} = "I", 
	\ar@{-} "D" ; "I", 
	\ar@{} (0,  0) ; (86,  0)  *{Q = } , 
	\ar@{} (0,  0) ; (100,  -10) *++!R{q_4}*\dir<4pt>{*} = "E", 
	\ar@{-} "E" ; (100,  -4) *++!R{q_3}*\dir<4pt>{*} = "F", 
	\ar@{-} "F" ; (100,  3) *++!R{q_2}*\dir<4pt>{*} = "G",
	\ar@{-} "G" ; (100,  10) *++!R{q_1}*\dir<4pt>{*} = "H",  
	\ar@{-} "E" ; (120,  -4) *++!D{q_5}*\dir<4pt>{*}, 
\end{xy}

\vspace{5mm}

\noindent
Since $p_1 < p_3$ and $q_3 < q_1$, it follows that no linear extension of $P$ 
is a linear extension of $Q$.  Then a routine computation guarantees that,
for any reverse lexicographic order as in Theorem \ref{Boston}, the binomial 
\begin{eqnarray}
\label{hello}
x_{\{2\}} x_{\{1, 2, 3, 4\}} y_{\{1, 2, 3, 4, 5\}} - x_{\{2, 4\}} y_{\{4, 5\}}z
\end{eqnarray}
belongs to the reduced Gr\"obner basis of $I_{\Omega(P, - Q)}$ with respect to $<$.
However, the toric ideal $I_{\Omega(P, - Q)}$ is generated by quadratic binomials.
The $S$-polynomial of the binomials 
\[
x_{\{2, 4\}} x_{\{1, 2 , 3\}} - x_{\{2\}} x_{\{1, 2, 3, 4\}}, \, \, \, 
x_{\{1, 2, 3\}} y_{\{1, 2, 3, 4, 5\}} - y_{\{4, 5\}} z
\]
belonging to a system of generators of $I_{\Omega(P, - Q)}$ coincided with
the binomial (\ref{hello}).
}
\end{Example}

\begin{Conjecture}
Let 
$P$ and $Q$ be arbitrary finite posets with $|P| = |Q| = d$.
Then the toric ideal $I_{\Omega(P, - Q)}$
is generated by quadratic binomials.
\end{Conjecture}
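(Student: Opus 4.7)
The plan is to prove that $I_{\Omega(P,-Q)}$ is generated in degree two by producing an explicit candidate set of quadratic binomials and then showing, via a combinatorial reduction, that every binomial in the ideal lies in the subideal they generate. The candidates consist of the families (i) and (ii) from Theorem \ref{Boston} together with an enlarged family (iii') of mixed quadratics
\[
x_I \, y_J \ - \ x_{I'} \, y_{J'} \qquad (x_\emptyset = y_\emptyset = z)
\]
with $I, I' \in \Jc(P) \cup \{\emptyset\}$, $J, J' \in \Jc(Q) \cup \{\emptyset\}$, $\rho(I)-\rho(J) = \rho(I')-\rho(J')$, and $|I|+|J| = |I'|+|J'|$. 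The family (iii) of Theorem \ref{Boston} corresponds to the sub-case in which $(I',J')$ is obtained from $(I,J)$ by removing one common maximal index. The example in Section 3 already shows that the full (iii') is genuinely larger: the binomial $x_{\{p_1,p_2,p_3\}} \, y_{\{q_1,q_2,q_3,q_4,q_5\}} - z \, y_{\{q_4,q_5\}}$ lies in (iii') but not in (iii).

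The argument would then proceed by strong induction on the degree of a binomial $f = u - v \in I_{\Omega(P,-Q)}$ with $u, v$ relatively prime. Apply (i) and (ii), exactly as in the proof of Theorem \ref{Boston}, to normalize the pure-$x$ and pure-$y$ parts of $u$ and $v$ into sorted chains in $\Jc(P)$ and $\Jc(Q)$. The substantive step is then to apply a mixed binomial from (iii'), possibly preceded by additional intra-chain (i)- or (ii)-moves to expose the right factors, so as to strictly decrease a combinatorial weight such as the total number of indices in $[d]$ appearing simultaneously in some $x$-variable and some $y$-variable of $u$. The example in Section 3 is instructive: the reduction there first rewrites $x_{\{p_2\}} x_{\{p_1,p_2,p_3,p_4\}}$ as $x_{\{p_2,p_4\}} x_{\{p_1,p_2,p_3\}}$ by a type-(i) move, and only then applies a (iii')-binomial to cancel the shared block. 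Once every shared index has been eliminated, the resulting binomial lies entirely in the $x$- or $y$-variables (with $z$), and quadratic generation follows from the classical theorem of Hibi on order polytopes.

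The hard part is proving that the required combined moves are always available. In Theorem \ref{Boston} the hypothesis that $P$ and $Q$ share a common linear extension is precisely what guarantees, after sorting, a common maximal element at the top of some $I_e$ and $J_{e'}$, permitting an immediate (iii)-step. Without that hypothesis the top elements of the sorted chains are combinatorially misaligned, and one must instead locate an interior exchange: a pair $(I_e, J_{e'})$ and an index $i$ that, after appropriate auxiliary (i)- or (ii)-moves, admits a (iii')-reduction. Proving that such an exchange always exists, and that it strictly decreases the chosen weight, is the combinatorial heart of the conjecture. The example in Section 3 warns that no single reverse lexicographic order can yield a quadratic Gr\"obner basis for these generators, so the reduction cannot be monotone with respect to any term order and must be carried out at the level of ideal membership, most likely via a multi-parameter induction on the pair $(\deg f, \ \textrm{overlap weight})$ rather than a single monomial-order descent.
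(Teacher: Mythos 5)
This statement is stated as a \emph{conjecture} in the paper; the authors do not supply a proof, and it is presented as an open problem motivated by Theorem \ref{Boston} and the quadratically-generated example in Section 3. So there is no ``paper's own proof'' to compare against.

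Your proposal is not a proof either, and you essentially say so yourself. The outline identifies a plausible candidate generating set (the families (i), (ii) together with an enlarged mixed family (iii')) and a plausible double-induction scheme (degree and an ``overlap weight''), but the crux --- showing that for every reduced binomial $u-v$ in $I_{\Omega(P,-Q)}$ one can always locate an admissible interior exchange after suitable intra-chain moves, and that this exchange strictly decreases the chosen weight --- is left entirely open. You explicitly flag this as ``the combinatorial heart of the conjecture'' and give no argument for it. Without that step the induction does not close, and the claim that every binomial reduces to the pure-$x$ and pure-$y$ Hibi relations is unsubstantiated. Your warning that no reverse lexicographic order can make the reduction monotone (drawn correctly from the Section 3 example) sharpens the difficulty, but it does not by itself tell you how to organize a non-monotone reduction; in fact it removes the standard Gr\"obner-basis machinery that powered the proof of Theorem \ref{Boston}, so an entirely different technique is needed. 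As written, the proposal is a reasonable research plan, not a proof; the conjecture remains open.

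One smaller point: even the claim that (iii') together with (i) and (ii) spans all quadrics of $I_{\Omega(P,-Q)}$ needs justification. In (iii') you impose both $\rho(I)-\rho(J)=\rho(I')-\rho(J')$ and $|I|+|J|=|I'|+|J'|$; the second condition is automatic (it is the degree-balance in $s$) given that you only consider homogeneous binomials of the same degree in $z$, so stating it separately is harmless, but the membership of arbitrary such binomials in the ideal generated by the \emph{quadratic} relations of types (i), (ii), (iii') is exactly what the conjecture asserts and cannot be assumed at the outset.
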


Let $\delta(\Delta(P,-Q))$ denote the $\delta$-vector (\cite[p.~79]{HibiRedBook}) 
of $\Delta(P,-Q)$.
It then follows that, if $P$ and $Q$ possess a common linear extension, then
$\delta(\Delta(P,-Q))$ is symmetric and unimodal.

\begin{Example}
{\em
Let $P = \{ p_{1}, \ldots, p_{d} \}$ be a chain and $Q = \{ q_{1}, \ldots, q_{d} \}$
an antichain.  Then the $\delta$-vector of $\Delta(P,-Q)$ is
\begin{eqnarray*}
d = 2 &:& (1, 3, 1), \\
d = 3 &:& (1, 7, 7, 1), \\
d = 4 &:& (1, 15, 33, 15, 1), \\
d = 5 &:& (1, 31, 131, 131, 31, 1), \\
d = 6 &:& (1, 63, 473, 883, 473, 63, 1).
\end{eqnarray*}
It seems likely that $\delta(\Delta(P,-Q))$ coincides with
the Pascal-like triangle \cite[pp.~11--12]{Barry} with $r = 1$.   
}
\end{Example}

\end{document}